\documentclass[11pt,reqno]{amsart}
\usepackage{amsmath,amssymb,amsthm}
\usepackage[hidelinks]{hyperref}
\newtheorem{thm}{Theorem}[section]

\theoremstyle{definition}

\newtheorem{lemma}[thm]{Lemma}
\newtheorem{corollary}[thm]{Corollary}
\newtheorem{theorem}[thm]{Theorem}
\newtheorem{conjecture}[thm]{Conjecture}

\numberwithin{equation}{section}

\def\N{\mathbb{N}}

\title{On the exponential Diophantine equation $(a^n+1)(b^n+1)=x^2$}

\author{Paulius Virbalas}
\address{Institute of Mathematics, Faculty of Mathematics and Informatics, Vilnius
University, Naugarduko 24, Vilnius LT-03225, Lithuania}
\email{paulius.virbalas@mif.vu.lt}

\subjclass{11D61, 11D41, 11B39} \keywords{Exponential Diophantine equations, Pell equations.}
\begin{document}
\begin{abstract}
We study the Diophantine equation $(a^n+1)(b^n+1)=x^2$, which belongs to the family of equations originating from the work of Szalay in 2000. If $a>1$, it is shown that the equation of the title has only one solution in positive integers, when $a$ and $b$ are distinct powers of the same integer $t>1$. Also, a complete description of the solutions is obtained under the assumptions that $a$ and $b$ are coprime and $n$ is even. Several other special cases of the equation are considered, and two conjectures are proposed.
\end{abstract}

\maketitle

\section{\textbf{Introduction}}\label{intro}

Let $\N$ denote the set of positive integers. In 2000, Szalay \cite{sza00} initiated the study of the Diophantine equation
\begin{equation}\label{sza}
(a^n-1)(b^n-1)=x^2,\quad a,b,n,x \in \N,\quad  1<a<b,
\end{equation}
by investigating the cases 
\[(a,b) \in \{ (2,3), (2,5), (2,2^k)\},\]
where $k\geq 2$ is an integer. Since then, several authors have extended Szalay's results (see, for example, \cite{haj00, ishi16, sza10, le09,  li11, luc02, xia13, yuan12}). Cohn \cite{cohn02} conjectured in 2002 that equation \eqref{sza} has no solutions in positive integers for $n> 4$. However, a general approach to the full conjecture does not appear to be known at present. More tools are available when $n$ is even, since in this case \eqref{sza} can often be reduced to a system of two Pell equations
\[
a^n-dy_1^2=1
\quad\text{and}\quad
b^n-dy_2^2=1,
\]
where
\[
(a^n-1)(b^n-1)=dy_1^2 \cdot dy_2^2=x^2.
\]
If $n$ is even, then it is conjectured that any solution of \eqref{sza} satisfies either $(n,a,b)=(4,13,239)$ or $n=2$.
By exploiting properties of solutions to Pell equations, Keskin \cite{kes19}, as well as Noubissie, Togbé and Zhang \cite{tog20}, obtained several partial results towards the conjecture of Cohn.\par
The original equation \eqref{sza} also inspired the investigation of several related equations. For example, in 2024, Fujita and Le \cite{le24} initiated the study of
\[
(a^n-1)(b^m-1)=ax^2,\quad a,b,n,m,x \in \N,\quad 1<a<b,
\]
while in 2025, He and Liu \cite{lh26} considered the equation
\[
(2^n-1)(b^n-1)=x^m, \quad b,n,m,x \in \N,\quad b,n,m>1.
\]
For further variations, see \cite{kes22, lh25, tong21, wl20}. In this paper, we study the plus analogue of \eqref{sza}, namely
\begin{equation}\label{virb}
(a^n+1)(b^n+1)=x^2,\quad a,b,n,x \in \N,\quad a<b.
\end{equation}

The first result classifies all solutions to \eqref{virb}, when $a$ and $b$ are distinct powers (possibly trivial) of the same positive integer $t$.

\begin{theorem}\label{powers}
Suppose that $a=t^r$ and $b=t^s$ for some positive integer $t$ and non-negative integers $r,s$. Then
\begin{equation}\label{me2}
(a^n+1)(b^n+1)=x^2,\quad a<b,
\end{equation}
holds if and only if one of the following occurs:
\begin{itemize}
\item[(i)] $n=3$ and $(a,b,x)=(1,23,156)$;
\item[(ii)] $n=2$ and $(a,b,x)=(1,u_k,2v_k)$ for some integer $k\geq 2$, where $(u_k,v_k)$ denotes the $k$-th positive integer solution of the negative Pell equation $u^2-2v^2=-1$;
\item[(iii)] $n=1$ and $(a,b,x)=(1,2m^2-1,2m)$ for some integer $m\geq2$;
\item[(iv)] $n=1$ and $(a,b,x)=(7,49,20)$.
\end{itemize}
\end{theorem}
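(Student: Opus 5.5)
The plan is to reduce everything to the single equation
\[
c^m+1=2y^2 ,
\]
together with Catalan-type equations. For that equation I would use the following known facts as black boxes:
\begin{itemize}
\item for $m=1$, every $c=2y^2-1$ is a solution;
\item for $m=2$, the solutions are exactly the $u_k$ from the negative Pell equation $u^2-2v^2=-1$;
\item for $m=3$, the equation is an elliptic curve whose integral points give only $c\in\{1,23\}$;
\item for $m=4$, or $m$ an odd prime $p\ge 5$, the result of Bennett--Skinner on $X^n+Y^n=2Z^2$ (with $Y=1$) gives only $c=1$.
\end{itemize}
Reducing a general exponent $m\ge3$ to one of these cases (each such $m$ is divisible by $4$, $3$, or an odd prime $p\ge5$) shows that $c^m+1=2y^2$ with $c>1$ and $m\ge3$ forces $m=3$, $c=23$.

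\emph{Case $r=0$, i.e. $a=1$.} The equation becomes $2(b^n+1)=x^2$, so $x=2y$ and $b^n+1=2y^2$ with $b>1$. The facts above give (iii) for $n=1$, with $b=2m^2-1$ and $x=2m$, where $m\ge2$ because $b>1$. For $n=2$ they give (ii), with $k\ge2$ because $b>1$. For $n\ge3$ they give only (i), and we check $23^3+1=2\cdot78^2$, so $x=156$.

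\emph{Case $1\le r<s$.} First pass to $a=c^p$, $b=c^q$ with $c>1$, $\gcd(p,q)=1$ and $p<q$, and set $A=c^n$. Then
\[
\gcd(A^p+1,A^q+1)=
\begin{cases}
A+1, & p,q \text{ both odd},\\
1 \text{ or } 2, & \text{otherwise}.
\end{cases}
\]

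\emph{Subcase: $p$ or $q$ even.} Here each factor is a square or twice a square, with the same choice for both factors.
\begin{itemize}
\item If both are squares, then $c^{qn}=(y-1)(y+1)$. By Mihăilescu (Catalan) this forces $c^{qn}=8$, i.e. $b=8$, and then $a\in\{2\}$. But $3\cdot 9$ is not a square, so this is excluded; the small cases are checked by hand.
\item If both are twice squares, then $c^{pn}+1=2y^2$ and $c^{qn}+1=2z^2$ with $qn>pn\ge1$. The facts above force $qn\in\{1,2,3\}$. Since $pn<qn$, the only combinations are $(pn,qn)=(1,2)$ with $c=2y^2-1$ a term of the $u_k$, or $(pn,qn)\in\{(1,3),(2,3)\}$ with $c=23$.
  \begin{itemize}
  \item For $c=23$: $23+1=24$ is not twice a square, and $23^2+1=530=2\cdot 265$ where $265$ is not a square. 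So $c=23$ is excluded.
  \item For $(1,2)$ with $n=1$, $p=1$, $q=2$: we need $c$ to be a Pell $u$-value and of the form $2y^2-1$. Among $1,7,41,239,\dots$, the value $7$ works and gives (iv), since $8\cdot50=400$. To show no other $u_k$ works, I would argue as follows. From $u^2+1=2v^2$ and $u+1=2y^2$ we get $u^2+1=2v^2$ with $u=2y^2-1$, hence
  \[
  (2y^2-1)^2+1=2v^2, \quad\text{i.e.}\quad 2y^4-2y^2+1=v^2 .
  \]
  This is a quartic elliptic equation, and I expect it to have only $y\in\{1,2\}$ by standard results (Ljunggren/Cohn type). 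The cases $n=2$, $p=1$, $q=2$ can be handled similarly, since they reduce to $pn=2$, $qn=4$, which the facts exclude.
  \end{itemize}
\end{itemize}

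\emph{Subcase: $p,q$ both odd.} Write $A^p+1=(A+1)P$ and $A^q+1=(A+1)Q$, so $PQ$ is a square. I would show that $\gcd(P,Q)=1$: a common prime $\ell$ would give $A\equiv-1$ of order dividing $\gcd(2p,2q)=2$ modulo $\ell$, hence $\ell\mid p$ and $\ell\mid q$, contradicting $\gcd(p,q)=1$. So $Q=(A^q+1)/(A+1)$ is a square with $q\ge3$ odd. With $X=-A$ this is Ljunggren's equation $(X^q-1)/(X-1)=Y^2$. Its known complete solution (also valid for negative $X$, via the Ljunggren/Bugeaud et al. results) gives no solution with $|X|>1$. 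This excludes the subcase.

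\emph{Main obstacle.} The hardest step is assembling exactly the right external results: Bennett--Skinner for $c^m+1=2y^2$ with $m\ge4$, and Ljunggren's equation at negative base. The Pell-intersection step in the twice-square subcase may also need a genuine elliptic-curve (or quartic) computation rather than elementary arguments.
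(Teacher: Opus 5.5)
Your proposal follows the paper's proof essentially step for step. It uses the same reduction to $A=c^n$ with coprime exponents and the same gcd dichotomy. It then applies Ljunggren's equation with negative base in the odd--odd case, Mihăilescu when both factors are squares, and Bennett--Skinner plus Waall to force $qn\le 3$ when both factors are twice squares, ending at the same quartic $Y^2=2X^4-2X^2+1$.

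Two points need fixing:
\begin{itemize}
\item The quartic step you leave as an expectation is closed in the paper only by a machine computation: MAGMA's \texttt{IntegralQuarticPoints} gives $X\in\{0,\pm1,\pm2\}$. You must actually supply that computation or an equivalent citation.
\item In the square--square case, $b^n=8$ forces $q=3$ and $n=1$, and then $p=2$ because one exponent must be even. So the remaining value is $a=4$, not $a=2$. It is excluded just as easily, since $a+1=5$ is not a square.
\end{itemize}
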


The next theorem concerns the case of even exponents.  

\begin{theorem}\label{t2}
Let $g=\gcd(a,b)$ and suppose that  
\[
(a^n+1)(b^n+1)=x^2,\qquad a<b,
\]
has a solution $(n,x) \in \N^2$ with $n$ even.
\begin{itemize}
\item[(i)] If $g=1$, then $n=2$ and
\[
(a,b,x)=(u_r,u_s,2v_rv_s),\quad r<s,\quad \gcd(2r-1,2s-1)=1.
\]
where $(u_k,v_k)$ denotes the $k$-th positive integer solution of the negative Pell equation $u^2-2v^2=-1$.
\item[(ii)] If $1<g<a$, then $4\nmid n$ and $g^2 < a$.
\item[(iii)] If $g=a$, then $4\nmid n$ and $a^2 < b$.
\end{itemize}
\end{theorem}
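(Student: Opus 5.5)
Write $n=2m$ and $A=a^m$, $B=b^m$, so the equation becomes $(A^2+1)(B^2+1)=x^2$. Let $d$ be the squarefree part of $A^2+1$. Then $A^2+1=dy_1^2$ and $B^2+1=dy_2^2$, so $A$ and $B$ are both solutions of the single negative Pell equation $X^2-dY^2=-1$. Its solutions are $X+Y\sqrt d=\varepsilon^{2j-1}$ for the fundamental solution $\varepsilon=X_1+Y_1\sqrt d$. Hence
\[
A+y_1\sqrt d=\varepsilon^{i},\qquad B+y_2\sqrt d=\varepsilon^{j},\qquad i<j \text{ odd}.
\]
The basic tool is the divisibility property of the sequence $X_k$, where $\varepsilon^k=X_k+Y_k\sqrt d$: for odd $k$ and $\ell$ one has $X_k\mid X_\ell$ whenever $k\mid\ell$, and $\gcd(X_k,X_\ell)=X_{\gcd(k,\ell)}$. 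Consequently $\gcd(A,B)=X_{\gcd(i,j)}$.

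For part (i), suppose $g=1$. Then $\gcd(A,B)=1$, so $X_{\gcd(i,j)}=1$. Since $X_k$ is strictly increasing and $X_1\ge1$, this forces $\gcd(i,j)=1$ and $X_1=1$. The relation $X_1^2+1=dY_1^2$ then gives $dY_1^2=2$, so $d=2$, and $A=u_r$, $B=u_s$ are Pell numbers with odd indices $i=2r-1$, $j=2s-1$. It remains to show $m=1$. The key fact is that $a^m=u_r$ with $m\ge2$ is impossible: a perfect power among the $u_k$ would give $X^{2m}+1=2Y^2$ with $m\ge2$, which has only $X=1$ (by Ljunggren's result on $X^4+1=2Y^2$ and its generalisation to $X^{2m}$). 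Since $a>1$ is not guaranteed, I also treat $a=1$: then $A=1=u_1$, so $b^m=u_s$ with $s\ge2$, and the same fact forces $m=1$. Thus $n=2$, and $x=2v_rv_s$, because $\sqrt{(u_r^2+1)(u_s^2+1)}=\sqrt{4v_r^2v_s^2}$. The condition $\gcd(2r-1,2s-1)=1$ is exactly $\gcd(i,j)=1$. Conversely, if $\gcd(2r-1,2s-1)=1$ then $\gcd(u_r,u_s)=u_1=1$, and such a pair gives a solution with $g=1$.

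For parts (ii) and (iii), $\gcd(A,B)=g^m=X_e$ with $e=\gcd(i,j)$, and both $X_e\mid X_i=A$ and $X_e\mid X_j=B$ hold. First I show $4\nmid n$, i.e. $m$ is odd. If $m$ is even, then $A=(a^{m/2})^2$ is a square with $A^2+1=dy_1^2$, so $a^{2m}+1=dy_1^2$ is a quartic. The strategy is to exploit $e\mid i$: writing $A=X_i$ as a polynomial $X_e\cdot P(X_e^2)$, one checks with Chebyshev-type expressions that $A/X_e$ and $X_e$ are almost coprime. Since $X_e=g^m$, the quotient $(a/g)^m$ would then have to equal $P(X_e^2)$, a value of the Chebyshev polynomial $T_{i/e}$ evaluated at an imaginary argument. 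Forcing it to be a perfect $m$-th power with $m$ even, i.e. a square, contradicts results on squares in Lucas-type sequences (Cohn / Ribenboim–McDaniel), because $X_k$ is a square only for $k=1$. In the same spirit, the sizes give the inequalities. If $g<a$ then $i>e$, so $i\ge3e$ and $A=X_i\ge X_{3e}=X_e(4X_e^2+3)>X_e^3$ (using $dY_e^2=X_e^2+1$). This yields $a^m>g^{3m}$, hence $a>g^3>g^2$. If instead $g=a$, then $i=e$ and $j\ge3i$, so $B>A^3$, giving $b>a^3>a^2$. The stated bounds are slightly weaker than these, so they should follow comfortably, though care is needed when $i=e$ but $j$ is not a multiple of $3e$; it suffices that $j\ge3e$ as an odd multiple of $e$.

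The main obstacle I expect is the $4\nmid n$ claim, i.e. excluding even $m$. The inequalities come essentially for free from the divisibility structure. Excluding even $m$, however, requires a genuine arithmetic input: showing that $a^{m}$, being a square, cannot occur as $X_i$ with $i>1$ under these gcd constraints. In the case $i=e$ of (iii), it requires showing that $X_1$ being a square is compatible only with $B=X_j$ not an $m$-th power. I would first try reducing to $X_j=\square$ and citing known results that a square term occurs only at $j=1$ in negative-Pell $X$-sequences.
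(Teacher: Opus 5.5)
Your part (i) is sound and follows the paper. The fact you need, that $X^{2m}+1=2Y^2$ with $m\ge 2$ forces $X=1$, is exactly Bennett--Skinner (Lemma~\ref{bs}) with $y=1$, rather than a generalisation of Ljunggren. Your inequalities are also correct, by a different and sharper route. The paper identifies $\gcd(a^n+1,b^n+1)=g^n+1$ and deduces $g^n+1\mid w^n-1$ for $a=gw$. You instead use $e\mid i$, $i\ge 3e$ and $X_{3e}=X_e(4X_e^2+3)$, which gives $a>g^3$ in (ii) and $b>a^3$ in (iii).

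The gap is the claim $4\nmid n$, which you rightly flag as the obstacle. The mechanism you sketch does not work.
\begin{itemize}
\item The quotient $X_i/X_e=(a/g)^m$ is not a term of the sequence $(X_k)$. A statement about which terms $X_k$ are squares therefore says nothing about whether this quotient is a square.
\item In case (iii) you have $i=e$, so the quotient is $1$ and the argument gives nothing. Also, with $d$ the squarefree part, $e=\gcd(i,j)$ need not be $1$, so ``$X_1$ being a square'' is not the relevant situation.
\item The Ribenboim--McDaniel theorem is stated for Lucas sequences with odd parameters. Here $X_k=V_k(2X_1,-1)/2$, so that result does not apply as cited.
\end{itemize}

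The missing idea is that no gcd or Chebyshev analysis is needed. If $m$ is even, then $(a^{m/2},y_1)$ and $(b^{m/2},y_2)$ are two distinct positive solutions of $X^4-dY^2=-1$ for the same $d$. This contradicts Ljunggren's theorem (Lemma~\ref{n4}) that this equation has at most one positive solution for each $d$. That one observation settles $4\nmid n$ in (ii) and (iii) simultaneously.

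Your closing remark about reducing to $X_j=\square$ points in this direction. However, it rests on the stronger claim that a square can occur only at index $1$, for which your citations do not clearly apply. The ``at most one solution'' statement is all that is required.
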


Note, that while part (i) of Theorem~\ref{t2} completely resolves the case with even $n$ and $\gcd(a,b)=1$, parts (ii) and (iii) only provide necessary (but not sufficient) conditions for a solution to exist. For example, if $a=6$ and $b=10$, then $g=2$, so indeed $g^2<6$. However, it can be seen by consideration modulo $3$, that $(6^n+1)(10^n+1)=x^2$ is not solvable in positive integers. \par

The paper is organized as follows. In Section~\ref{sec0}, we collect several auxiliary results on Diophantine equations that will be used throughout the proofs. In Section~\ref{sec00}, we present some basic facts concerning the negative Pell equation $u^2-dv^2=-1$. Section~\ref{sec1} is devoted to the proof of Theorem~\ref{powers}, while Theorem~\ref{t2} is proved in Section~\ref{sec2}. Finally, in Section~\ref{sec3}, we discuss several examples and compare our results with those known for Szalay's original equation~\eqref{sza}. We conclude by stating two conjectures.

%%%%%%%%%%%%%%%%%%%%%%%%%%%%%%%%%%%%%%%%%%%%%%%%%%%%%%%%
%%%%%%%%%%%%%%%%%%%%%%%%%%%%%%%%%%%%%%%%%%%%%%%%%%%%%%%%%
%%%%%%%%%%%%%%%%%%%%%%%%%%%%%%%%%%%%%%%%%%%%%%%%%%%%%%%%

\section{\textbf{Auxiliary results on Diophantine equations}}\label{sec0}

We begin by stating several auxiliary results on Diophantine equations that will be used in the proofs. The first one is Mihăilescu’s theorem \cite{mih02}, formerly known as Catalan's conjecture.

\begin{lemma}[Mihăilescu {\cite{mih02}}]\label{mih}
The equation 
\[ 
x^m-y^n=1
\] 
has a unique solution in positive integers $x,y,m,n>1$, namely 
\[ 3^2-2^3=1. \] 
\end{lemma}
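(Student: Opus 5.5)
This lemma is Mihăilescu's theorem, and in the paper it will simply be quoted from \cite{mih02}, not reproved. Still, the route I would follow to establish it is the now-standard one, in the order below.

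\emph{Reduction to prime exponents.} If $x^m-y^n=1$ with $m,n>1$, pick primes $p\mid m$ and $q\mid n$. Replacing $x$ by $x^{m/p}$ and $y$ by $y^{n/q}$ gives $X^p-Y^q=1$. So it suffices to show that the only solution with prime exponents is $3^2-2^3=1$. We then also have to check that this solution does not lift to composite exponents, which is immediate since $3$ and $2$ are not perfect powers.

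\emph{The cases where $2$ is an exponent.}
\begin{itemize}
\item If $q=2$, the equation $X^p=Y^2+1$ has no solutions by Lebesgue's classical argument. One factors in $\mathbb{Z}[i]$ as $(Y+i)(Y-i)=X^p$; the two factors are coprime, so $Y+i$ is a $p$-th power up to a unit, and comparing imaginary parts gives a contradiction.
\item If $p=2$, we have $X^2-1=Y^q$ with $q$ odd. Ko Chao's theorem gives $(X,Y)=(3,2)$ with $q=3$. I would use the elementary route via the factorization $(X-1)(X+1)=Y^q$ together with the theory of the Pell-type equations that arise.
\end{itemize}

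\emph{Both exponents odd primes.} This is the main obstacle, and I would follow Mihăilescu's cyclotomic strategy in three steps.
\begin{enumerate}
\item Cassels' divisibility results show that $q\mid X$ and $p\mid Y$. One then writes $X-1=p^{q-1}a^q$ (or the analogous form), with the symmetric statement for $Y$.
\item Working in $\mathbb{Q}(\zeta_p)$, the element $(X-\zeta_p)/(1-\zeta_p)$ generates a $q$-th power ideal. From this one derives the double Wieferich condition $p^{q-1}\equiv 1 \pmod{q^2}$ and $q^{p-1}\equiv 1 \pmod{p^2}$.
\item One then shows that $p$ and $q$ cannot be too far apart in size, for instance $q<4p^2$ and symmetrically. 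This uses the annihilation of the $q$-part of the class group of $\mathbb{Q}(\zeta_p)^+$ via Thaine's theorem, together with a study of the plus part of the units and the Stickelberger ideal.
\end{enumerate}

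Combining these constraints with the Wieferich-type congruences, the remaining small range would either be excluded by explicit computation or ruled out by Mihăilescu's final argument on the minus part. That final argument shows $p\mid h_q^-$ and $q\mid h_p^-$, which eventually contradicts the size bounds.

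In the paper itself, the lemma is used only as a black box (e.g.\ when $a^n+1$ or $b^n+1$ is forced to be a perfect power), so citing \cite{mih02} is all that is required there.
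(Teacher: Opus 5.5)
The paper gives no proof of this lemma. It is quoted from \cite{mih02} and used only as a black box, so your closing remark matches what the paper does. The roadmap you sketch for actually proving it, however, does not close when both exponents are odd primes.

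The inequalities $q<4p^2$ and $p<4q^2$ are relative bounds, not absolute ones. Combined with Cassels' divisibilities and the double Wieferich congruences, they leave no finite ``remaining small range'' for a computer to check. For instance, the known double Wieferich pair $(p,q)=(83,4871)$ satisfies $p^{q-1}\equiv 1\pmod{q^2}$, $q^{p-1}\equiv 1\pmod{p^2}$ and both inequalities. Likewise, the divisibilities $p\mid h_q^-$ and $q\mid h_p^-$ do not contradict these bounds, and your outline gives no way to make them do so.

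What is missing is Mihăilescu's congruence theorem: $p\equiv 1\pmod q$ or $q\equiv 1\pmod p$. This is what the plus argument built on Thaine's theorem actually produces. The bounds $p<4q^2$ and $q<4p^2$ come from the minus side, via the Stickelberger ideal, so your step (3) conflates the two arguments. With the congruence in hand, the end is elementary:
\begin{enumerate}
\item Suppose $p\equiv 1\pmod q$ and write $p=1+kq$. Then $p^{q-1}\equiv 1-kq\pmod{q^2}$, so the Wieferich condition forces $p\equiv 1\pmod{q^2}$.
\item Since $p<4q^2$ and $p$ is odd, $p=2q^2+1$, which is divisible by $3$ unless $q=3$.
\item The remaining pair $(p,q)=(19,3)$ is excluded: $3^6=729\equiv 7\pmod{361}$, hence $3^{18}\equiv 343\not\equiv 1\pmod{19^2}$.
\end{enumerate}
The case $q\equiv 1\pmod p$ is symmetric.
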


\begin{lemma}[Ljunggren {\cite{lj43}}]\label{y2}
The equation
\[
\frac{x^n+1}{x+1}=y^2
\]
has no solutions in positive integers $x,n,y$ with $x>1$ and odd $n\geq3$.
\end{lemma}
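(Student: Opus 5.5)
Since this lemma is a classical result that the paper quotes from Ljunggren \cite{lj43} rather than proves, I would treat it as a black box. If I had to reconstruct an argument, the plan below is the one I would try. It is a sketch, not a checked proof; the general prime case rests on Ljunggren's original method, which I only outline.

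\textbf{Reformulation.} Put $X=-x$. Then the equation becomes the negative-base case of the Nagell--Ljunggren equation,
\[
\frac{X^n-1}{X-1}=y^2,\qquad X<-1 .
\]
The first step is to control the gcd. Since $x^n+1=(x+1)\,y^2$ with $n$ odd, and
\[
\frac{x^n+1}{x+1}\equiv n \pmod{x+1},
\]
we get $\gcd\bigl(y^2,\,x+1\bigr)\mid n$.

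\textbf{Reduction to prime exponent.} Suppose $n=pm$ with $p$ an odd prime, and set $z=x^m$. Then
\[
\frac{x^n+1}{x+1}=\frac{z^p+1}{z+1}\cdot\frac{z+1}{x+1}.
\]
The two factors on the right have gcd dividing $p$. So either each factor is a square, or each is $p$ times a square. In the first case $(z^p+1)/(z+1)=y_1^2$ with $z>1$, which reduces to the prime exponent $p$. In the second case I would show that $p\cdot\square$ is impossible by working modulo $p^2$, using $(z^p+1)/(z+1)\equiv p \pmod{(z+1)}$ together with the lifting-the-exponent lemma, which gives $v_p\bigl((z^p+1)/(z+1)\bigr)=1$ when $p\mid z+1$.

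\textbf{The case $p=3$.} The equation is $x^2-x+1=y^2$, that is,
\[
(2x-1)^2+3=(2y)^2 .
\]
Factoring as $(2y-2x+1)(2y+2x-1)=3$ forces $x=1$, which is excluded.

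\textbf{The case $p\ge 5$ (main obstacle).} I expect this to be the hard part. Ljunggren's route works in the cyclotomic field $\mathbb{Q}(\zeta_p)$, or rather its real subfield. One factors
\[
\frac{x^p+1}{x+1}=\prod_{j}\bigl(x+\zeta_p^{\,j}\bigr),
\]
uses the gcd bound above to deduce that each factor is a unit times an ideal square, and reduces the problem to a unit equation in a quartic (or real cyclotomic) field. That unit equation is then solved by Skolem's $p$-adic method.

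A modern alternative for $p\ge 5$ is the modular method. Write $x+1=cw^2$ with $c$ squarefree and $c\mid p$ (by the gcd bound). This gives the ternary equation
\[
x^p+1=c\,(wy)^2,\qquad c\in\{1,p\},
\]
a case of $A^p+B^p=cC^2$. Bennett and Skinner's results for this family show there are no solutions with $AB\ne\pm1$, which excludes $x>1$.

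I would attempt the modular route first, and fall back on citing \cite{lj43} directly if the level-lowering hypotheses for $c=p$ cannot be verified.
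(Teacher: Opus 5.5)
Quoting \cite{lj43} as a black box is exactly what the paper does. It gives no argument of its own, so at that level your proposal agrees with it. Your gcd computation and your treatment of $p=3$ are fine. As a reconstruction, however, the sketch breaks at two concrete points.

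First, in the reduction to prime exponent, the branch where both factors are $p$ times a square cannot be excluded by anything local at $p$. The lifting-the-exponent conclusion $v_p\bigl((z^p+1)/(z+1)\bigr)=1$ is precisely what $p\cdot w^2$ with $p\nmid w$ requires. Likewise, the congruence $(z^p+1)/(z+1)\equiv p \pmod{p^2}$, valid when $p\mid z+1$, only forces $w^2\equiv 1\pmod p$. In fact $(z^3+1)/(z+1)=z^2-z+1=3w^2$ is equivalent to $(2z-1)^2-12w^2=-3$. This has infinitely many solutions, e.g.\ $(z,w)=(2,1),(23,13)$, and all of them satisfy $3\mid z+1$. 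So this branch is a genuinely new global problem. You would have to solve $(z^p+1)/(z+1)=p\,w_1^2$ with $z=x^m$ simultaneously with $(x^m+1)/(x+1)=p\,w_2^2$, and nothing in your sketch does that.

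Second, in the modular route the claim that $x+1=cw^2$ with $c\mid p$ ``by the gcd bound'' is unfounded. The bound $\gcd\bigl(y^2,x+1\bigr)\mid n$ only restricts the common prime factors of $y$ and $x+1$. It says nothing about the squarefree kernel of $x+1$ itself, which is unconstrained (nothing rules out, say, $x+1=6$). What you actually obtain is $x^p+1=c\,C^2$ with $c$ depending on $x$ and unbounded. Bennett--Skinner type results, of which Lemma~\ref{bs} is the case $c=2$, only treat a fixed finite list of coefficients. The level of the Frey curve depends on $\operatorname{rad}(c)$, so there is no uniform argument. Even the single coefficient $c=p$, which varies with the exponent, is not covered, as you yourself suspected. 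Since the cyclotomic/Skolem paragraph for $p\ge5$ only names a strategy, the lemma still rests entirely on the citation, which is exactly how the paper uses it.
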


\begin{lemma}[Waall {\cite{ww72}}]\label{rw}
The only solutions to 
\[
x^3+1=2y^2
\]
in positive integers are $(x,y)=(1,1)$ and $(x,y)=(23,78)$. \end{lemma}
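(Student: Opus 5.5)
The plan is to reduce the equation to the integral points of a Mordell curve. The main step, as expected for a classical result of this kind, is to determine all integral points on that curve.

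\textbf{Reduction to a Mordell curve.} Let $x^3+1=2y^2$ with $x,y\in\N$. Multiplying by $8$ gives
\[
(4y)^2=(2x)^3+8 .
\]
So $(X,Y)=(2x,4y)$ is an integral point on the elliptic curve
\[
E:\;Y^2=X^3+8
\]
with $X$ even and $Y\equiv 0 \pmod 4$. Conversely, such a point with $X,Y>0$ gives a solution of the equation in the lemma. It therefore suffices to show that the integral points of $E$ with $Y>0$ are exactly
\[
(X,Y)\in\{(1,3),(2,4),(46,312)\}.
\]
The only points among these meeting the congruence conditions are $(2,4)$ and $(46,312)$. They correspond to $(x,y)=(1,1)$ and $(23,78)$.

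\textbf{Elementary splitting.} Before treating $E$ directly, I would carry out the elementary factorisation, which isolates where the real difficulty lies. Since $x^3+1$ is even, $x$ is odd. Write
\[
x^3+1=(x+1)(x^2-x+1),
\]
where the second factor is odd and $\gcd(x+1,x^2-x+1)\mid 3$. This gives two cases.

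\emph{Case $\gcd=1$.} Here $x+1=2u^2$ and $x^2-x+1=v^2$. For $x>1$ we have
\[
(x-1)^2<x^2-x+1<x^2,
\]
so $x^2-x+1$ cannot be a square. Hence $x=1$, which gives the solution $(1,1)$.

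\emph{Case $\gcd=3$.} Here $x+1=6u^2$ and $x^2-x+1=3v^2$. Substituting $x=6u^2-1$ yields the quartic
\[
v^2=12u^4-6u^2+1 .
\]
We must show that its only positive solution is $u=2$ (with $v=13$, giving $x=23$).

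\textbf{Main obstacle: the quartic.} Proving that $u=2$ is the only positive solution of $v^2=12u^4-6u^2+1$ is the key difficulty. The quartic is birational to $E$, which has rank $1$ over $\mathbb{Q}$ and trivial torsion. I would first try a classical Ljunggren/Nagell-style argument. Work in $\mathbb{Q}(\sqrt3)$ or $\mathbb{Q}(\sqrt[3]{2})$: write $2x=X$ and factor $X^3+8$ over $\mathbb{Q}(\sqrt[3]{2})$, then reduce to finitely many Thue equations of the form $F(s,t)=c$ with cubic forms $F$. Each of these is resolved by Skolem's $p$-adic method, or by Nagell's theorem on cubic Thue equations with negative discriminant, which have at most a bounded number of solutions that can be listed.

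If this becomes unwieldy, the fallback is the standard modern method: compute a generator $P=(2,4)$ of $E(\mathbb{Q})$ and bound the integral points using Baker-type lower bounds for elliptic logarithms. Then reduce that bound with LLL, and finish with a short search confirming that the only integral points are $\pm(1,3)$, $\pm(2,4)$, $\pm(46,312)$ and $(-2,0)$. Either route ends with the same finite list, which by the reduction above gives exactly $(x,y)=(1,1)$ and $(23,78)$.
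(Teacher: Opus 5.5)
The paper does not prove this lemma; it quotes it from van der Waall (1972). Your route is genuinely different, and its skeleton is sound. The identity $(2x)^3+8=(4y)^2$ reduces the lemma to the integral points of $E\colon Y^2=X^3+8$. Your list $(-2,0),(1,\pm3),(2,\pm4),(46,\pm312)$ is the correct one, and the conditions $2\mid X$, $4\mid Y$ leave exactly $(2,4)$ and $(46,312)$. Your elementary splitting down to $v^2=12u^4-6u^2+1$ is also correct. You can cite the known determination of the integral points of this Mordell curve, or do a one-line \texttt{IntegralPoints} computation in MAGMA, which is the same standard the paper accepts for $Y^2=2X^4-2X^2+1$. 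Either way you get a proof at the paper's level of rigour, and it also shows where the sporadic $x=23$ comes from.

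What you must fix is your description of how that list would be proved, because as written it fails.
\begin{itemize}
\item The torsion of $E(\mathbb{Q})$ is not trivial: $T=(-2,0)$ has order $2$, and you list it yourself. With $P=(2,4)$ one computes $2P=(-7/4,13/8)$, $P+T=(1,-3)$ and $2P+T=(46,-312)$. Since $T\notin\langle P\rangle$, the point $(46,\pm312)$ that yields $(x,y)=(23,78)$ is not a multiple of $P$. An elliptic-logarithm bound followed by a search over the multiples $mP$, as you describe it, would return only $\pm(2,4)$ and lose that solution. The search must run over $mP+\epsilon T$ with $\epsilon\in\{0,1\}$.
\item The map $(u,v)\mapsto(12u^2-2,\,12uv)$ from your quartic to $E$ is $2$-to-$1$. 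So the quartic is birational to the $2$-isogenous curve, whose Jacobian has $j=54000$, and not to $E$, which has $j=0$. This is a smaller inaccuracy.
\item The field $\mathbb{Q}(\sqrt[3]{2})$ plays no role, since $X^3+8$ already has the rational root $-2$. The natural classical setting is $\mathbb{Z}[\sqrt2]$. As $x$ is odd, the factors in $(1+y\sqrt2)(1-y\sqrt2)=-x^3$ are coprime, so $1+y\sqrt2=\varepsilon^j(a+b\sqrt2)^3$ with $\varepsilon=1+\sqrt2$ and $j\in\{0,1,2\}$. The case $j=0$ gives $y=0$, and $j=2$ is $j=1$ with $y\mapsto -y$.
\item The case $j=1$ is the Thue equation $a^3+6a^2b+6ab^2+4b^3=1$. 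Its solutions $(1,0)$ and $(5,-1)$ give $y=1$ and $y=78$. The Delone--Nagell bound (at most five solutions) does not by itself exclude others, so you would still need Skolem's method or a Baker-type bound there.
\end{itemize}
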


\begin{lemma}[Bennett and Skinner {\cite{bs04}}]\label{bs}
If $n\geq4$ is an integer, then the equation
\[
x^n+y^n=2z^2
\]
has no solutions in coprime positive integers $x,y,z$ satisfying $x>y$.
\end{lemma}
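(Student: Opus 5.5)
Since this lemma is quoted from Bennett and Skinner \cite{bs04} and not proved in the paper, the following is only a sketch of how one would establish it from scratch. The approach is the modular method for ternary equations of signature $(p,p,2)$.

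\textbf{Reduction to prime or small exponents.} Every $n\ge4$ is divisible by $4$, by a prime $p\ge5$, or by $6$ or $9$. If $d\mid n$, a solution for $n$ gives a coprime solution $(x^{n/d},y^{n/d},z)$ with exponent $d$. It therefore suffices to treat the exponents $4$, $6$, $9$ and all primes $p\ge5$.

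\textbf{Small exponents.} For $n=4$, the equation $x^4+y^4=2z^2$ with $\gcd(x,y)=1$ forces $x,y$ odd. Then
\[
(x^4-y^4)^2=(x^4+y^4)^2-4x^4y^4=4z^4-4x^4y^4,
\]
which gives a nontrivial solution of $X^4-Y^4=W^2$ unless $x=y$. Fermat's classical descent rules this out, so $x=y=1$, which violates $x>y$.

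For $n=6$ and $n=9$, the equation descends to rational points on explicit curves. For $n=6$, $x^6+y^6=2z^2$ gives rational points on the curves $X^3+1=2Y^2$ and $X^3+1=2Y^2\cdot(\text{square})$. These are rank-zero elliptic curves whose points can be listed, in the spirit of Lemma~\ref{rw}, and only the trivial solution $x=y$ survives. The case $n=9$ is handled analogously by combining the cubic parametrisation of $X^3+Y^3=2Z^2$ with the extra condition that $X$ and $Y$ are cubes.

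\textbf{Primes $p\ge5$.} Suppose $x^p+y^p=2z^2$ with $x,y$ odd, coprime and $x>y$. Attach the Frey curve
\[
E:\; Y^2 = X^3 + 4zX^2 + 2y^pX ,
\]
or a twist normalised so that the discriminant is $2^{\ast}(xy)^{2p}$ up to a bounded power of $2$. The steps are:
\begin{enumerate}
\item Compute that $E$ is semistable away from $2$ and that its conductor is $2^{\alpha}\,\mathrm{rad}(xy)$ with small $\alpha$.
\item Invoke modularity and irreducibility of $\bar\rho_{E,p}$ for $p\ge7$. Irreducibility follows from Mazur's theorem together with the rational $2$-torsion point on $E$; the case $p=5$ needs a separate argument.
\item Apply Ribet's level lowering. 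This shows that $\bar\rho_{E,p}$ arises from a weight-$2$ newform of level $32$ (or of a few small $2$-power levels). At level $32$ the only newform is the CM form attached to $y^2=x^3-x$.
\end{enumerate}

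\textbf{Main obstacle.} The hardest step is excluding this CM newform. It is not ruled out by trace comparisons, because the trivial solution $x=y=1$ produces exactly this curve. I would first try the argument used by Darmon and Merel and by Bennett and Skinner. Since the level-lowered form has CM, the image of $\bar\rho_{E,p}$ lies in the normaliser of a Cartan subgroup. Results on rational points of $X_0^+(p)$ and of normaliser-of-Cartan modular curves (Momose, Darmon–Merel) then force $E$ to have potentially good reduction at every odd prime. This in turn forces $xy=\pm1$, so $x=y=1$, contradicting $x>y$. The prime $p=5$ and any leftover small levels would be dispatched by an explicit descent or a Chabauty computation.
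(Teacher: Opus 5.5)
The paper gives no proof of this lemma. It imports it from Bennett--Skinner, whose argument is the modular method you outline, so your overall architecture is right, but two steps fail as written.

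\emph{The case $n=6$.} The curve $2Y^2=X^3+1$ does not have rank zero. It contains $(1/2,3/4)$ and also the integral point $(23,78)$ of Lemma~\ref{rw}. On the model $V^2=U^3+8$ (with $U=2X$, $V=4Y$) that point becomes $(46,312)$, and $312^2\nmid 27\cdot 64$, so by Nagell--Lutz it has infinite order. Hence the rational points cannot be listed. You must use that $X=(x/y)^2$ is a square, i.e.\ determine the rational points of the genus-two curve $2Y^2=X^6+1$. Its Jacobian is isogenous to the square of this positive-rank curve, so classical Chabauty does not apply either. The easy repair is elementary. Write $x^6+y^6=(x^2+y^2)(x^4-x^2y^2+y^4)$. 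The two factors are coprime (their gcd divides $3$, and $3\nmid x^2+y^2$), and $v_2(x^2+y^2)=1$. So $x^4-x^2y^2+y^4$ is a square, which classically forces $x^2=y^2$.

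Your $n=9$ paragraph has the same weakness in a stronger form. The equation $X^3+Y^3=2Z^2$ has infinitely many primitive solutions, and the cube conditions on $X,Y$ lead to curves of higher genus. Determining their rational points is the real content of that case, and the sketch does not do it.

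\emph{The prime case: the level is misidentified.} Take your Frey curve $Y^2=X^3+4zX^2+2y^pX$ with $x,y$ odd. Then $v_2(\Delta)=9$, $a_6=0$ and $v_2(b_8)=v_2(4y^{2p})=2$, so Tate's algorithm gives Kodaira type III and conductor exponent $8$ at $2$. Since the reduction at $2$ is potentially good and $p\ge5$, level lowering lands at level $256$, not $32$.

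Consistently, the trivial solution $x=y=z=1$ gives $Y^2=X^3+4X^2+2X$. This curve has $j=8000$, i.e.\ CM by $\mathbb{Z}[\sqrt{-2}]$, and conductor $256$; it is not $y^2=x^3-x$. No quadratic twist can turn it into the level-$32$ form, because twists keep CM by $\mathbb{Q}(\sqrt{-2})$ while that form has CM by $\mathbb{Q}(i)$.

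The space $S_2^{\mathrm{new}}(\Gamma_0(256))$ is six-dimensional. Among others, it contains CM newforms for $\mathbb{Q}(i)$ (from $y^2=x^3\pm2x$) and for $\mathbb{Q}(\sqrt{-2})$ (from $y^2=x^3\pm4x^2+2x$). Every newform at this level has to be eliminated, and the Cartan-normaliser step has to be run for both CM fields. Moreover, when the Cartan is nonsplit there is no general determination of $X_{\mathrm{ns}}^+(p)(\mathbb{Q})$ to invoke. There you are relying entirely on the special arguments in the literature (Darmon--Merel, as applied by Bennett--Skinner).

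The strategy survives. But the level, the list of newforms, and the identity of the CM form that blocks trace comparisons all need to be redone.
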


\begin{lemma}[Ljunggren {\cite{lj42}}; see also Cohn {\cite{cohn97}}]\label{n4} 
For every fixed positive integer $d$, the equation \[ x^4-dy^2=-1 \] has at most one solution in positive integers $x,y$. 
\end{lemma}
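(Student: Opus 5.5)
The plan is the classical route through the structure of the solution set of the negative Pell equation, which Section~\ref{sec00} also uses. Fix $d$ and assume $u^2-dv^2=-1$ is solvable; otherwise there is nothing to prove. Let $\varepsilon=u_1+v_1\sqrt d$ be the fundamental solution, of norm $-1$. Every positive solution of $u^2-dv^2=-1$ then has the form $u_m+v_m\sqrt d=\varepsilon^{m}$ with $m$ odd. A solution $(x,y)$ of $x^4-dy^2=-1$ is exactly an odd index $m$ with $u_m$ a perfect square, $u_m=x^2$, $v_m=y$. So the lemma is equivalent to the assertion that \emph{the sequence $(u_m)_{m \text{ odd}}$ contains at most one square}.

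\textbf{Step 1: reduce to prime indices.} Since $u_{m}$ divides $u_{mk}$ for odd $k$, one wants to show that squares in $(u_m)$ can only occur at very restricted indices. I would write $u_{mk}$ as a polynomial in $u_m$:
\[
u_{mk}=u_m\cdot \frac{\alpha^{k}+\bar\alpha^{k}}{\alpha+\bar\alpha},\qquad \alpha=\varepsilon^m,\ \bar\alpha=-\varepsilon^{-m}.
\]
The two factors are essentially coprime (their gcd divides $k$). So if $u_{mk}$ is a square, both factors are squares up to a divisor of $k$. This lets me assume that two square terms $u_m,u_{m'}$ with $m<m'$ satisfy $m'=mp$ for a single odd prime $p$, after passing to $\varepsilon^m$ in place of $\varepsilon$ (which preserves the setup). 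It therefore suffices to prove:

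\emph{If $\eta=U+V\sqrt d$ has norm $-1$ and $U=X^2$, then $\frac{\eta^p+\bar\eta^p}{\eta+\bar\eta}$ is not a square (up to the factor $p$) for any odd prime $p$.}

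\textbf{Step 2: Jacobi symbol obstruction.} Expanding gives an expression of the form
\[
\frac{u_{mp}}{u_m}=\sum_j c_j\, U^{2j}(dV^2)^{(p-1)/2-j},
\]
with $dV^2=U^2+1$. This is a polynomial in $U^2=X^4$, and I would use the Cohn-style method of evaluating Jacobi symbols. Choose a suitable modulus from the recurrence, for example $u_{2^r}$-type companion values or $U^2+1$. Then use the periodicity of $(u_n)$ modulo such a modulus to show that the cofactor is a quadratic nonresidue modulo some odd divisor, for every odd $p$. For small $p$ (in particular $p=3$, where the cofactor is $4U^2+3$ up to sign conventions), this is direct. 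Showing that $4X^4+3$ is never a square is immediate, since it lies strictly between consecutive squares $(2X^2)^2$ and $(2X^2+1)^2$.

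\textbf{Main obstacle.} The hard part is handling all primes $p\ge5$ uniformly. The Jacobi-symbol trick in Cohn's treatment of $x^4-Dy^2=1$ works neatly only for some residue classes of $p$. For the remaining classes I expect to fall back on Ljunggren's original method. That method rewrites $x^2+ \sqrt{-1}$ as a unit times a square in $\mathbb{Q}(\sqrt{-1},\sqrt d)$, yielding a unit equation in a quartic field, and then applies a Skolem $p$-adic argument to show that at most one exponent works. I would attempt the elementary route first and, if some residue class of $p$ resists, invoke the quartic-field/Skolem argument for it. Since the paper cites this as a known result of Ljunggren (with Cohn's simplification), in the text it is legitimate to rely on \cite{lj42,cohn97} for this final step.
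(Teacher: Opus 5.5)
The paper gives no proof of this lemma. It is quoted from Ljunggren \cite{lj42}, with Cohn \cite{cohn97} as a further reference, and used as a black box in the proof of Theorem~\ref{t2}. Your closing move, relying on \cite{lj42,cohn97}, is therefore exactly the paper's treatment, and that alone suffices. The elementary argument you sketch before it is not a proof, however, and it has a flaw beyond the obstacle you acknowledge.

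The flaw is the reduction to a prime index in Step~1.

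\textbf{What is fine.} Reducing to two square terms $u_m,u_{m'}$ with $m\mid m'$ is legitimate. With $g=\gcd(m,m')$, the squarefree part of $u_g=\gcd(u_m,u_{m'})$ divides both $m/g$ and $m'/g$. These are coprime, so $u_g$ is itself a square.

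\textbf{Going up from $u_m$ fails.} Suppose $m'=mk$ with $k$ composite and $p\mid k$. The factorization $u_{mk}=u_{mp}\cdot(u_{mk}/u_{mp})$ only gives $u_{mp}/u_m=c\,\square$, where $c$ is some squarefree divisor of $k/p$. For $k=pq$ this allows $c=q$. So neither ``$u_{mp}$ is a square'' nor your ``up to the factor $p$'' follows.

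\textbf{Going down from $u_{m'}$ changes the hypothesis.} This route gives $u_{m'/p}\in\{\square,\,p\,\square\}$. You would then have to treat $U=pX^2$ as well as $U=X^2$. Already for $p=3$, the case $U=3X^2$ gives $u_{3n}=U(4U^2+3)=9X^2(12X^4+1)$, so you need $Y^2=12X^4+1$ to have no solutions. That is a quartic equation of the same Ljunggren type, and your consecutive-squares remark about $4X^4+3$ does not touch it.

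\textbf{Step~2 is only a programme.} No modulus is actually specified, and every prime $p\ge5$ is left to Ljunggren's method.

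So in your proposal the entire content of the lemma rests on the citation. That is acceptable here, since the paper does the same, but it should be presented as an appeal to \cite{lj42,cohn97}, not as a proof reduced to prime indices.
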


As usual, $\gcd(a,b)$ denotes the greatest common divisor of positive integers $a$ and $b$. The following lemma is standard, but for completeness, we include its proof, since we were unable to find a direct reference.

\begin{lemma}\label{gcdplus}
Let $t,r,s$ be positive integers and put $g=\gcd(r,s)$. Then
\[
\gcd(t^r+1,t^s+1)=
\begin{cases}
t^g+1, & \text{if } \frac{r}{g}\text{ and }\frac{s}{g}\text{ are odd},\\[4pt]
1 \text{ or } 2, & \text{otherwise}.
\end{cases}
\]
\end{lemma}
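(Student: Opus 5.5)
The natural route is to reduce to the classical identity $\gcd(t^r-1,t^s-1)=t^g-1$ by working with the order of $t$ modulo a prime. Put $d=\gcd(t^r+1,t^s+1)$. If $t=1$, then $d=2$, which agrees with both alternatives since $t^g+1=2$. So assume $t\ge 2$. The proof then splits into a divisibility statement for odd primes, a $2$-adic check, and a final assembly.

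\textbf{Odd primes.} Let $p$ be an odd prime dividing $d$. Then $t^r\equiv -1\pmod p$ and $t^s\equiv -1\pmod p$, so $t$ is invertible modulo $p$. Let $e$ be the multiplicative order of $t$ modulo $p$. Since $t^{2r}\equiv 1$ but $t^r\equiv -1\not\equiv 1$ (because $p>2$), we get $e\mid 2r$ and $e\nmid r$. Hence $v_2(e)=v_2(r)+1$, and likewise $v_2(e)=v_2(s)+1$. Therefore $v_2(r)=v_2(s)=v_2(g)$, which is exactly the condition that $r/g$ and $s/g$ are both odd. Consequently, in the ``otherwise'' case no odd prime divides $d$, so $d$ is a power of $2$.

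\textbf{The power of $2$.} If $t$ is even, $d$ is odd, so $d=1$. If $t$ is odd, then $t^k+1\equiv 2\pmod 4$ for odd $k$. For even $k$ we have $t^k\equiv 1\pmod 8$, so again $t^k+1\equiv 2\pmod 4$. Thus $v_2(d)\le 1$ in every case, and in the ``otherwise'' case $d\in\{1,2\}$.

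\textbf{The case $r/g$, $s/g$ both odd.} Since $r/g$ is odd, $t^g+1$ divides $t^r+1$: indeed $x+1\mid x^m+1$ for odd $m$, applied with $x=t^g$. Similarly $t^g+1\mid t^s+1$, so $t^g+1\mid d$.

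For the reverse divisibility, note that $d\mid t^{2r}-1$ and $d\mid t^{2s}-1$. By the standard fact $\gcd(t^m-1,t^n-1)=t^{\gcd(m,n)}-1$, which follows from the Euclidean algorithm via $t^m-1\equiv t^{m-n}-1\pmod{t^n-1}$, we get $d\mid t^{2g}-1=(t^g-1)(t^g+1)$.

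It remains to show $\gcd(d,t^g-1)\mid 2$. If $q\mid d$ and $q\mid t^g-1$, then $t^g\equiv 1\pmod q$, so $t^r\equiv 1\pmod q$ because $g\mid r$. Combined with $t^r\equiv -1\pmod q$, this gives $q\mid 2$.

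Hence $d\mid 2(t^g+1)$, so $d/(t^g+1)\in\{1,2\}$. To exclude the factor $2$, use the $2$-adic bound: $v_2(d)\le 1$, while $t^g+1$ is even whenever $t$ is odd. So for $t$ odd, $v_2(2(t^g+1))\ge 2>v_2(d)$, and for $t$ even, $d$ is odd. Either way $d=t^g+1$.

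\textbf{Main obstacle.} The only delicate point is the bookkeeping of the factor $2$ when $t$ is odd, and the $2$-adic estimate above handles it. Everything else is routine order and gcd arithmetic.
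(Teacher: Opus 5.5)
Your order argument for odd primes and your derivation of $t^g+1\mid d\mid 2(t^g+1)$ are correct. The $2$-adic step, however, contains a false claim, and it is exactly the step you use to rule out $d=2(t^g+1)$. It is not true that $t^k+1\equiv 2\pmod 4$ for odd $t$ and odd $k$: if $t\equiv 3\pmod 4$ then $t^k+1\equiv 0\pmod 4$ (e.g.\ $3+1=4$). So ``$v_2(d)\le 1$ in every case'' fails. For $t=3$, $r=1$, $s=3$ one has $g=1$, both quotients odd, and $d=\gcd(4,28)=4=t^g+1$, so $v_2(d)=2$. Whenever $t\equiv 3\pmod 4$ and $r,s$ are both odd, the bound $v_2(d)\le 1$ contradicts the divisibility $t^g+1\mid d$ you had just proved. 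So in that subcase you have given no valid reason why $d\neq 2(t^g+1)$.

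The missing idea is the correct $2$-adic comparison between $t^r+1$ and $t^g+1$. For $t$ odd and $m=r/g$ odd,
\[
\frac{t^r+1}{t^g+1}=\sum_{i=0}^{m-1}(-1)^i\,t^{g(m-1-i)}
\]
is a sum of $m$ odd integers, hence odd. Therefore $v_2(d)\le v_2(t^r+1)=v_2(t^g+1)<v_2\bigl(2(t^g+1)\bigr)$. This is precisely how the paper excludes the extra factor $2$: it observes that this quotient is odd, so $2(t^g+1)\nmid t^r+1$.

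In the ``otherwise'' case your conclusion $d\in\{1,2\}$ does survive, but you should justify it properly. If $r/g$ is even then $r$ is even, so for odd $t$ the true half of your claim (even exponents) gives $v_2(t^r+1)=1$ and hence $v_2(d)\le 1$. With these repairs, your treatment of the ``otherwise'' case via $v_2(\mathrm{ord}_p t)=v_2(r)+1=v_2(s)+1$ is a clean alternative to the paper's reduction modulo $t^g-1$ and $t^g+1$.
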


\begin{proof}
Let $d=\gcd(t^r+1,t^s+1)$. If $d=1$, then there is nothing to prove. Therefore, assume that $d>1$. Since $t^r\equiv t^s \equiv -1 \pmod d$, we obtain $t^{2r}\equiv t^{2s}\equiv 1 \pmod d$. We also see that the order of $t$ modulo $d$ divides $2g=2\gcd(r,s)$, which implies that $t^{2g} \equiv 1 \pmod d$. Consequently,
\begin{equation}\label{eq1}
d\mid t^{2g}-1=(t^g-1)(t^g+1).
\end{equation}
\textbf{Case I}. First, suppose that $r/g$ and $s/g$ are both odd. Then it is well-known that
\[
t^r+1=(t^g)^{r/g}+1\quad\text{and}\quad t^s+1=(t^g)^{s/g}+1
\]
are both divisible by $t^g+1$. Therefore, $d=d'\cdot (t^g+1)$ for some positive integer $d'$. From \eqref{eq1} it follows that $t^g-1 \equiv 0 \pmod{d'}$. Then, on the one hand, we have
\[
t^r+1=(t^g)^{r/g}+1 \equiv 2 \pmod{d'} \quad \text{and} \quad  t^s+1=(t^g)^{s/g}+1\equiv 2.
\]
On the other hand, from the definition of $d$ it follows that 
\[t^r+1\equiv t^s+1 \equiv 0 \pmod{d'}.\] 
Thus, $2 \equiv 0 \pmod{d'}$, which forces $d' \in \{1,2\}$. However, if $d'=2$, then $2(t^g+1)$ divides $t^r+1$. Using expansion of $(x^n-1)/(x-1)$, it is immediate to check that
\[\frac{(t^g)^{r/g}+1}{t^g+1}\equiv 1 \pmod {2},\]
due to the fact that $r/g$ is odd. Therefore, $d'=1$, that is 
\[d=t^g+1,\]
as claimed.\par
\textbf{Case II.} Next suppose that at least one of $r/g$ and $s/g$ is even. Since $r/g$ and $s/g$ are coprime, exactly one of them is even. Without loss of generality, assume that $r/g$ is even and $s/g$ is odd. Then modulo $t^g+1$ we have
\begin{equation}\label{eq22}
t^r+1=(t^g)^{r/g}+1\equiv 2 \quad \text{and}\quad  
t^s+1=(t^g)^{s/g}+1\equiv 0 ,
\end{equation}
while modulo $t^g-1$ we get
\begin{equation}\label{eq33}
t^r+1=(t^g)^{r/g}+1\equiv 2 \quad \text{and}\quad   t^s+1\equiv (t^g)^{s/g}+1\equiv 2.
\end{equation}
Recall from \eqref{eq1}, that $d=\gcd(t^r+1,t^s+1)$ divides $(t^g-1)(t^g+1)$. It is also clear that $\gcd(t^g-1, t^g+1)=2$. Therefore,  \eqref{eq22} combined with \eqref{eq33} forces $d \in \{1,2\}$. If $t$ is even, we get $d=1$, and if $t$ is odd, $d=2$. This completes the proof of the lemma.
\end{proof}

%%%%%%%%%%%%%%%%%%%%%%%%%%%%%%%%%%%%%%%%%%%%%%%%%%%%%%%%%
%%%%%%%%%%%%%%%%%%%%%%%%%%%%%%%%%%%%%%%%%%%%%%%%%%%%%%%%
%%%%%%%%%%%%%%%%%%%%%%%%%%%%%%%%%%%%%%%%%%%%%%%%%%%%%%%%%%%

\section{\textbf{Preliminaries on the negative Pell equation}}\label{sec00}

The search for solutions to $(a^n+1)(b^n+1)=x^2$ for even $n$ often leads to a system of negative Pell equations, namely
\[
a^n-dy_1^2=-1
\quad\text{and}\quad
b^n-dy_2^2=-1,
\]
where $dy_1^2\cdot dy_2^2=x^2$. Accordingly, we collect several facts concerning such equations. Let $d$ be a positive integer which is not a perfect square and consider the negative Pell equation
\begin{equation}\label{negp}
u^2-dv^2=-1. 
\end{equation}
Assume that \eqref{negp} is solvable in positive integers. Let $(u_1,v_1)$ denote its minimal positive integer solution, that is,
\[
u_1+v_1\sqrt d
=
\min\{u+v\sqrt d:\ u,v\in\mathbb N,\ u^2-dv^2=-1\}.
\]
It is well known that all positive integer solutions $(u_j,v_j)$ of \eqref{negp} are given by
\begin{equation}\label{formula}
u_j+v_j\sqrt d
=
(u_1+v_1\sqrt d)^{2j-1},
\qquad j\geq 1.
\end{equation}

Note that, in contrast to the Pell equation $u^2-dv^2=1$, which always has infinitely many integer solutions, the negative Pell equation $u^2-dv^2=-1$ may have no integer solutions at all. For further details, one may consult Keith Conrad's expository paper on Pell equations \cite{conp}. The next two lemmas establish several divisibility properties in the same spirit as those for the classical Pell equation. We include the proofs, since we were unable to find direct references for these results.

\begin{lemma}\label{gcd2}
Suppose that the negative Pell equation
\[
u^2-dv^2=-1
\]
is solvable in positive integers. Consider any two positive integer solutions
$(u_r,v_r)$ and $(u_s,v_s)$. Let $\gcd(2r-1,2s-1)=2g-1$. Then
\[
\gcd(u_r,u_s)=u_g
\qquad\text{and}\qquad
\gcd(v_r,v_s)=v_g.
\]
\end{lemma}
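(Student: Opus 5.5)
Write $\varepsilon=u_1+v_1\sqrt d$, so that $u_j+v_j\sqrt d=\varepsilon^{2j-1}$ and $u_j-v_j\sqrt d=\bar\varepsilon^{\,2j-1}$ with $\varepsilon\bar\varepsilon=-1$. Hence
\[
u_j=\frac{\varepsilon^{2j-1}+\bar\varepsilon^{\,2j-1}}{2},\qquad v_j=\frac{\varepsilon^{2j-1}-\bar\varepsilon^{\,2j-1}}{2\sqrt d}.
\]
Put $m=2r-1$, $k=2s-1$. Define, for every odd $\ell\ge1$, $U_\ell=(\varepsilon^\ell+\bar\varepsilon^\ell)/2$ and $V_\ell=(\varepsilon^\ell-\bar\varepsilon^\ell)/(2\sqrt d)$, so that $u_j=U_{2j-1}$ and $v_j=V_{2j-1}$. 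The claim then reads $\gcd(U_m,U_k)=U_{\gcd(m,k)}$ and $\gcd(V_m,V_k)=V_{\gcd(m,k)}$. I will prove this by a Euclidean-algorithm argument on the indices, in the spirit of the classical proof that $\gcd(F_m,F_k)=F_{\gcd(m,k)}$ for Fibonacci numbers.

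\textbf{Step 1: addition formulas.} Expanding $\varepsilon^{\ell+2t}=\varepsilon^\ell\varepsilon^{2t}$ gives identities for all integers $t$. Write $\varepsilon^{2t}=X_t+Y_t\sqrt d$, where $(X_t,Y_t)$ solve the positive Pell equation and are integers. Then
\[
U_{\ell+2t}=U_\ell X_t+dV_\ell Y_t,\qquad V_{\ell+2t}=V_\ell X_t+U_\ell Y_t.
\]
More useful is the subtraction form valid for odd $\ell>\ell'$, which follows from $\varepsilon^{a}\bar\varepsilon^{\,b}$ manipulations and $\bar\varepsilon=-\varepsilon^{-1}$:
\[
U_\ell=2U_{\ell'}X'-\dots,
\]
or, cleaner, divisibility statements. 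Specifically, I will prove two facts:
\begin{itemize}
\item[(a)] $U_{\ell}\mid U_{\ell q}$ and $V_\ell\mid V_{\ell q}$ for odd $q$. These follow because $(\varepsilon^{\ell q}\pm\bar\varepsilon^{\ell q})/(\varepsilon^\ell\pm\bar\varepsilon^\ell)$ is a symmetric expression in $\varepsilon^\ell,\bar\varepsilon^\ell$ with integer coefficients, hence a rational integer. Here one uses that $q$ is odd for the plus sign.
\item[(b)] $\gcd(U_\ell,V_\ell)=1$, which is immediate from $U_\ell^2-dV_\ell^2=-1$.
\end{itemize}

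\textbf{Step 2: a reduction step.} For odd $m>k$, write $m=k+2t$ (with $t\geq1$) and use the identity
\[
\varepsilon^{m}=\varepsilon^{k}\varepsilon^{2t}.
\]
Here $\varepsilon^{2t}$ has integer coordinates $X_t$ and $Y_t$, and $\varepsilon^{m-2k}$ may be taken with $\varepsilon^{-1}=-\bar\varepsilon$. The cleaner route is the standard recurrence
\[
U_m = 2U_kX'\pm U_{|m-2k|}
\]
obtained from $\varepsilon^m+\bar\varepsilon^m=(\varepsilon^k+\bar\varepsilon^k)(\varepsilon^{m-k}+\bar\varepsilon^{m-k})-(\varepsilon\bar\varepsilon)^k(\varepsilon^{m-2k}+\bar\varepsilon^{m-2k})$. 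Here $m-k$ is even, so $\varepsilon^{m-k}+\bar\varepsilon^{m-k}=2X$ is an even integer, and $(\varepsilon\bar\varepsilon)^k=-1$. Thus
\[
U_m\equiv \pm U_{|m-2k|}\pmod{U_k},
\]
and $|m-2k|$ is odd. The same computation with minus signs gives $V_m\equiv\pm V_{|m-2k|}\pmod{V_k}$. In both cases a factor of $\sqrt d$ cancels.

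\textbf{Step 3: running Euclid.} The pair $(m,k)\mapsto(k,|m-2k|)$ preserves $\gcd$ and stays within odd numbers. It strictly decreases $\max$ unless $|m-2k|=m$, which cannot happen for $k\ge1$ with $m\geq k$. It terminates when the two indices coincide, that is, at $(g',g')$ with $g'=\gcd(m,k)=2g-1$. Along the way $\gcd(U_m,U_k)=\gcd(U_k,U_{|m-2k|})$, so $\gcd(U_m,U_k)=U_{2g-1}=u_g$. Note that $U_\ell$ and $V_\ell$ are defined for negative odd $\ell$ up to sign, so no case split is needed. The same holds for $V$.

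The main obstacle is checking the sign and parity bookkeeping in Step 2. This includes confirming that $\varepsilon^{m-k}+\bar\varepsilon^{m-k}$ and $(\varepsilon^{m-k}-\bar\varepsilon^{m-k})/\sqrt d$ are integers for even exponent; this holds since $\varepsilon^{2}\in\mathbb Z[\sqrt d]$ and $\bar\varepsilon^{2}$ is its conjugate. It also includes confirming that the descent really terminates at equal indices rather than at $0$, which is ensured because all indices remain odd.
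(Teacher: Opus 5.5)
Your proof is correct and is essentially the paper's argument: your identity $U_m=2XU_k+U_{m-2k}$ (with $\varepsilon^{m-k}=X+Y\sqrt d$) is exactly the paper's $A_n-A_{n-2m}=2A_mA_{n-m}$, and your descent $(m,k)\mapsto(k,|m-2k|)$ through odd indices is the same Euclidean reduction. The only deviation is in the $v$-coordinates. There the paper uses the plain addition formula $B_n=A_mB_{n-m}+B_mA_{n-m}$ together with $\gcd(A_m,B_m)=1$, passing through even indices. You instead use the parallel identity $V_m=2XV_k-V_{m-2k}$, which comes from expanding $(\varepsilon^k-\bar\varepsilon^{\,k})(\varepsilon^{m-k}+\bar\varepsilon^{\,m-k})$ rather than from flipping every sign, keeps all indices odd, and makes your facts (a) and (b) superfluous.
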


\begin{proof}
Put
\[
\varepsilon=u_1+v_1\sqrt d.
\]
For every integer $k\geq 0$, define integers $A_k$ and $B_k$ by
\begin{equation}\label{pellg}
\varepsilon^k=A_k+B_k\sqrt d.
\end{equation}
Since $u_1^2-dv_1^2=-1$, we have 
\begin{equation}\label{pella}
A_k^2-dB_k^2=(-1)^k.
\end{equation}
In particular, $\gcd(A_k,B_k)=1$ for every $k\geq 0$. By \eqref{formula}, for every $j\geq 1$ we have
\[
A_{2j-1}=u_j
\qquad\text{and}\qquad
B_{2j-1}=v_j.
\]
We first prove the assertion for the second coordinates. Assume that $n$ and $m$ are odd positive integers with $n>m$. After applying \eqref{pellg} to $\varepsilon^n=\varepsilon^m\varepsilon^{n-m}$ and collecting similar terms we obtain
\[
B_n=A_mB_{n-m}+B_mA_{n-m}.
\]
Hence
\[
\gcd(B_m,B_n)=\gcd(B_m,A_mB_{n-m}).
\]
Since $\gcd(A_m,B_m)=1$, it follows that
\[
\gcd(B_m,B_n)=\gcd(B_m,B_{n-m}).
\]
Repeating this argument, we obtain, by the Euclidean algorithm,
\[
\gcd(B_m,B_n)=B_{\gcd(m,n)}.
\]
Taking $m=2r-1$ and $n=2s-1$, we obtain
\[
\gcd(v_r,v_s)=v_g,
\]
where $\gcd(2r-1,2s-1)=2g-1$.\par
It remains to prove the assertion for the first coordinates. As before, $n$ and $m$ are odd positive integers with $n>m$. We first record a simple expression for negative powers of $\varepsilon$. From \eqref{pellg} and \eqref{pella}
we derive 
\[\varepsilon^{-k} = \frac{A_k-B_k\sqrt d}{(-1)^k} = (-1)^k(A_k-B_k\sqrt d). 
\]
In particular, if $k$ is odd, then 
\begin{equation}\label{pellgn}
\varepsilon^{-k}=-(A_k-B_k\sqrt d). 
\end{equation}
After applying \eqref{pellg} to $\varepsilon^n=\varepsilon^m\varepsilon^{n-m}$ and collecting similar terms, we obtain
\[ A_n=A_mA_{n-m}+dB_mB_{n-m}. \] 
Since $m$ is odd, we have 
\[ \varepsilon^{-m}=-(A_m-B_m\sqrt d). \] 
Therefore, after applying \eqref{pellgn} to
$\varepsilon^{n-2m}=\varepsilon^{n-m}\varepsilon^{-m}$ and collecting similar terms we get 
\[ A_{n-2m}=-A_mA_{n-m}+dB_mB_{n-m}. \]
Subtracting the last two equalities gives 
\[ A_n-A_{n-2m}=2A_mA_{n-m}. \]
Hence \[ A_n\equiv A_{n-2m}\pmod {A_m}, \] 
and therefore
\[ \gcd(A_m,A_n)=\gcd(A_m,A_{n-2m}). \] 
Since $n-2m$ is odd, we have $A_{n-2m}=\pm A_{|n-2m|}$. Thus
\[ \gcd(A_m,A_n)=\gcd(A_m,A_{|n-2m|}). \] The operation $ (m,n)\mapsto (m,|n-2m|)$ preserves the greatest common divisor of the indices and decreases the larger index. Repeating this reduction, we eventually obtain 
\[ \gcd(A_m,A_n)=A_{\gcd(m,n)}. \] 
Taking $m=2r-1$ and $n=2s-1$, we obtain 
\[ \gcd(u_r,u_s)=u_g, \]
where $\gcd(2r-1,2s-1)=2g-1$. This completes the proof.
\end{proof}

\begin{lemma}\label{div}
Suppose that the negative Pell equation \[ u^2-dv^2=-1 \] is solvable in positive integers. Then for every $j\geq1$ we have
\[ u_1\mid u_j \qquad\text{and}\qquad v_1\mid v_j. \] 
\end{lemma}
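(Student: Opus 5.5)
The quickest route is to read Lemma~\ref{div} off Lemma~\ref{gcd2}. Fix $j\geq 1$ and apply Lemma~\ref{gcd2} to the two solutions $(u_1,v_1)$ and $(u_j,v_j)$, so $r=1$ and $s=j$. Since $\gcd(2\cdot 1-1,2j-1)=\gcd(1,2j-1)=1=2\cdot1-1$, the index $g$ in Lemma~\ref{gcd2} equals $1$. The lemma then gives
\[
\gcd(u_1,u_j)=u_1 \qquad\text{and}\qquad \gcd(v_1,v_j)=v_1,
\]
which say exactly that $u_1\mid u_j$ and $v_1\mid v_j$. The case $j=1$ is trivial in any event.

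If I wanted a self-contained argument instead, I would use the notation $\varepsilon=u_1+v_1\sqrt d$ from the proof of Lemma~\ref{gcd2} and write $u_j+v_j\sqrt d=\varepsilon^{2j-1}$ by \eqref{formula}. Expanding binomially gives
\[
u_j=\sum_{i \text{ even}}\binom{2j-1}{i}u_1^{2j-1-i}v_1^{i}d^{i/2},
\qquad
v_j=\sum_{i \text{ odd}}\binom{2j-1}{i}u_1^{2j-1-i}v_1^{i}d^{(i-1)/2}.
\]
In the first sum $i\le 2j-2$, so every exponent $2j-1-i$ is at least $1$ and $u_1$ divides each term. In the second sum $i\geq 1$, so $v_1$ divides each term. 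This route needs only that $2j-1$ is odd and uses no gcd machinery.

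Neither route has a real obstacle. The only point to check in the first route is that the parametrization $\gcd(2r-1,2s-1)=2g-1$ is legitimate with $g=1$, and it is.
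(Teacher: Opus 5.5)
Your first route is exactly the paper's proof: Lemma~\ref{div} is the special case $r=1$, $s=j$ of Lemma~\ref{gcd2}, where $\gcd(1,2j-1)=1$ forces $g=1$. Your binomial-expansion alternative is also correct and self-contained. Because $2j-1$ is odd, every even-index term carries a factor $u_1$ and every odd-index term carries a factor $v_1$, so this route avoids relying on the Euclidean-algorithm argument in the proof of Lemma~\ref{gcd2}.
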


\begin{proof} This is a corollary of Lemma~\ref{gcd2} with $r=1$ and $s=j$. \end{proof}

In both Theorem~\ref{powers} and Theorem~\ref{t2}, we will encounter the equation $x^2+1=2y^2$. Accordingly, we record all its positive integer solutions.

\begin{lemma}\label{pel2}
The equation
\[
x^2-2y^2=-1
\]
has infinitely many solutions in positive integers. Moreover, every positive integer solution is of the form $(x,y)=(u_j,v_j)$ for some integer $j\ge1$, where
\[
u_j=\frac{\alpha^{2j-1}+\beta^{2j-1}}{2},
\qquad
v_j=\frac{\alpha^{2j-1}-\beta^{2j-1}}{2\sqrt2},
\]
with
\[
\alpha=1+\sqrt2,\qquad
\beta=1-\sqrt2.
\]
\end{lemma}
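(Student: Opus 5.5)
The plan is to reduce Lemma~\ref{pel2} to the general description \eqref{formula} of solutions of a negative Pell equation, specialised to $d=2$.

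First, the equation $u^2-2v^2=-1$ is solvable in positive integers, since $(1,1)$ is a solution. It is the minimal one. Indeed, any positive solution has $v\ge1$ and $u\ge1$, so $u+v\sqrt2\ge 1+\sqrt2$, with equality only for $(u,v)=(1,1)$. Hence $\varepsilon=u_1+v_1\sqrt2=1+\sqrt2=\alpha$. Formula \eqref{formula} then says that every positive solution satisfies
\[
u_j+v_j\sqrt2=\alpha^{2j-1}
\]
for some $j\ge1$, and conversely that each such power gives a positive solution. Since these values are strictly increasing in $j$, the solutions are pairwise distinct, which yields infinitely many solutions.

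Second, to get the explicit formulas, I would apply the nontrivial automorphism $\sqrt2\mapsto-\sqrt2$ of $\mathbb{Q}(\sqrt2)$ to the identity $u_j+v_j\sqrt2=\alpha^{2j-1}$. Because $u_j,v_j$ are integers, this gives
\[
u_j-v_j\sqrt2=\beta^{2j-1},
\]
where $\beta=1-\sqrt2$. Adding the two identities and dividing by $2$ gives the formula for $u_j$. Subtracting them and dividing by $2\sqrt2$ gives the formula for $v_j$.

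There is no real obstacle here. The only point needing care is that \eqref{formula} is quoted as "well known" in the paper. If one wants the argument self-contained, the step to verify is that every positive solution is an odd power of $\alpha$. I would do this by descent: for a solution $(u,v)$ with $u+v\sqrt2>\alpha$, multiply by $\alpha^{-2}=3-2\sqrt2$. This produces a smaller element of norm $-1$ whose coordinates are still positive, namely $(3u-4v,\,3v-2u)$. Positivity follows from the inequalities $\tfrac{2}{3}u<v<\tfrac{3}{4}u$, which come from $u^2+1=2v^2$ once $u>1$. Since the elements keep decreasing and stay at least $\alpha$, the process terminates, and it can only stop at the minimal solution $(1,1)$.
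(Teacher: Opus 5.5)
Your proposal is correct and follows essentially the same route as the paper. Both arguments identify $(1,1)$ as the minimal solution, invoke \eqref{formula} to get $u_j+v_j\sqrt2=\alpha^{2j-1}$, conjugate to obtain $u_j-v_j\sqrt2=\beta^{2j-1}$, and then add and subtract.

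Your additional justifications fill in details the paper leaves implicit, and they are sound. These are the minimality of $(1,1)$, the distinctness of the powers (which gives infinitely many solutions), and the optional descent via multiplication by $3-2\sqrt2$. In that descent, note that the bound $v<\tfrac34 u$ needs $u^2>8$; this holds because any solution with $u>1$ has $u$ odd and hence $u\ge 3$.
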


\begin{proof}
Observe that $x^2-2y^2=-1$ has minimal solution
$(u_1,v_1)=(1,1)$. By \eqref{formula}, all positive integer solutions are given by 
\[ u_j+v_j\sqrt2=(1+\sqrt2)^{2j-1},\qquad j\geq 1. \] Putting $\alpha=1+\sqrt2$ and $\beta=1-\sqrt2$, we get 
\[ u_j+v_j\sqrt2=\alpha^{2j-1} \quad\text{and}\quad u_j-v_j\sqrt2=\beta^{2j-1}. \]
Adding and subtracting these two identities gives
\[ u_j=\frac{\alpha^{2j-1}+\beta^{2j-1}}{2} \quad\text{and}\quad v_j=\frac{\alpha^{2j-1}-\beta^{2j-1}}{2\sqrt2}. \]
This proves the lemma. 
\end{proof}

%%%%%%%%%%%%%%%%%%%%%%%%%%%%%%%%%%%%%%%%%%%%%%%%%%%%%%%%
%%%%%%%%%%%%%%%%%%%%%%%%%%%%%%%%%%%%%%%%%%%%%%%%%%%%%%%%
%%%%%%%%%%%%%%%%%%%%%%%%%%%%%%%%%%%%%%%%%%%%%%%%%%%%%%%%

\section{\textbf{Proof of Theorem~\ref{powers}}}\label{sec1}
\begin{proof}
Since $a<b$, it follows that $r<s$. First we deal with the case $r=0$, which leads to $a=t^r=1$. Therefore, the main equation 
\begin{equation}\label{mn}
(a^n+1)(b^n+1)=x^2
\end{equation}
becomes
\[2(b^n+1)=x^2.\]
Consequently, 
\begin{equation}\label{aa1}
b^n+1=2y^2
\end{equation}
for some $y\in\N$ and $b>1$. If $n\geq4$, then from Lemma~\ref{bs} follows that $\eqref{aa1}$ has no solutions. If $n=3$, Lemma~\ref{rw} implies that $(b,y)=(23,78)$. Thus, $(a,b,x)=(1,23,156)$ is a solution of \eqref{mn}. If $n=2$, then solutions of \eqref{aa1} correspond to the positive solutions of the negative Pell equation 
\begin{equation}\label{aa3}
u^2-2v^2=-1,
\end{equation}
which is completely described in Lemma~\ref{pel2}. It remains to observe that for any solution $(u_k, v_k)$ of \eqref{aa3} with $u_k>1$, the triple $(a,b,x)=(1,u_k,2v_k)$ is a solution of \eqref{mn}. Finally, if $n=1$, then for any $m \in \N$ it is clear that $(b,y)=(2m^2-1,m)$ is a solution to \eqref{aa1}. Consequently $(a,b,x)=(1, 2m^2-1,2m)$ is a solution to \eqref{mn} for any integer $m\geq2$. This completes the proof for the case $a=1$.\par
For the remainder of the proof we assume that $a>1$, which implies that $t>1$ and $s>1$. We have
\[a^n+1=t^{rn}+1\quad\text{and}\quad b^n+1=t^{sn}+1,\quad r<s.\]
Set
\[d=\gcd(t^{rn}+1, t^{sn}+1)\quad\text{and}\quad g=\gcd(rn,sn).\]
Then $rn=ge$ and $sn=gw$ for some positive integers $e$ and $w$ satisfying  $\operatorname{gcd}(e,w)=1$. Since $a\neq b$, it is also clear that $e\neq w$. \par
\textbf{Case I}. Assume first that both $e=rn/g$ and $w=sn/g$ are odd. Then Lemma~\ref{gcdplus} implies that $d=t^{g}+1$. Dividing both sides of $(a^n+1)(b^n+1)=x^2$ by $d$ leads to
\begin{equation*}
\frac{t^{ge}+1}{t^{g}+1}\cdot \frac{t^{gw}+1}{t^{g}+1}=\left(\frac{x}{t^{g}+1}\right)^2
\end{equation*}
with 
\[\gcd\left(\frac{t^{ge}+1}{t^{g}+1}, \frac{t^{gw}+1}{t^{g}+1}\right)=1.\]
Therefore, by setting $t^{g}=z$, we derive that for some positive integers $y_1$ and $y_2$, the following must hold
\[\frac{z^{e}+1}{z+1}=y_1^2\quad\text{and}\quad\frac{z^{w}+1}{z+1}=y_2^2.\]
Since $t>1$, we have $z>1$. Therefore, Lemma~\ref{y2} implies that $e=w=1$, which forces $r=s$, a contradiction. \par
\textbf{Case II}. Next assume that at least one of $e=rn/g$ and $w=sn/g$ is even. Then, Lemma~\ref{gcdplus} implies that $d=1$ or $d=2$. \par
If $d=1$, then 
\[t^{rn}+1=y_1^2\quad\text{and}\quad t^{sn}+1=y_2^2\]
for some distinct positive integers $y_1$ and $y_2$. Recall that $r<s$. If $rn>1$, this implies that the equation $u^m+1=v^2$ has two distinct solutions $(u,v,m) \in \N^3$ with $m>1$. However, this is impossible according to Lemma~\ref{mih}. If $rn=1$, then $r=n=1$. Then Lemma~\ref{mih} implies that the only solution in positive integers of $t^{s}+1=y_2^2$ is $(t,s,y_2)=(2,3,3)$. Therefore, $t=2$, which leads to $2^1+1=y_1^2$, a contradiction. \par
If $d=2$, then for some positive integer $y_1$ and $y_2$, we must have
\begin{equation}\label{comb}
t^{rn}+1=2y_1^2\quad\text{and}\quad t^{sn}+1=2y_2^2.
\end{equation}
However, according to Lemma~\ref{bs}, for $t>1$ this is impossible if at least one of $rn$ or $sn$ is greater than $3$. Thus, $1\leq rn < sn \leq 3$. \par
If $rn=1$, then $r=n=1$ and $s=2$ since $s$ must be even. From \eqref{comb} we have then that 
\[t=2y_1^2-1\quad\text{and}\quad (2y_1^2-1)^2+1=2y_2^2.\]
By rearranging the second equation we get
\[y_2^2=2y_1^4-2y_1^2+1.\]
The corresponding quartic curve
\[
Y^2=2X^4-2X^2+1
\]
has genus $1$. A standard computation of integral points on genus $1$ curves using MAGMA's \texttt{IntegralQuarticPoints} routine shows that its integral points are precisely
\[
(X,Y)=(0,\pm1),(\pm1,\pm1),(\pm2,\pm5).
\]
Since $y_1 <y_2$, we obtain that $(y_1,y_2)=(2,5)$. Thus 
\[t=2\cdot 2^2-1=7\quad \text{and} \quad x^2=(t+1)(t^2+1)=20^2.\]
Thus, $(a,b,x)=(7,7^2,20)$ is the solution of the original equation \eqref{mn}.\par
If $rn=2$, then $sn=3$. This is possible only if $(n,r,s)=(1,2,3)$. Thus \eqref{comb} becomes
\[t^{2}+1=2y_1^2\quad\text{and}\quad t^{3}+1=2y_2^2.\]
By Lemma~\ref{rw}, for $t>1$, the equation $t^{3}+1=2y_2^2$ has the unique solution $(t,y_2)=(23,78)$. However, $23^2+1=2y_1^2$ has no solutions in positive integers. This completes the proof of Theorem~\ref{powers}. 
\end{proof}

%%%%%%%%%%%%%%%%%%%%%%%%%%%%%%%%%%%%%%%%%%%%%%%%%%%%%%%%%%%%
%%%%%%%%%%%%%%%%%%%%%%%%%%%%%%%%%%%%%%%%%%%%%%%%%%%%%%%%%%%%
%%%%%%%%%%%%%%%%%%%%%%%%%%%%%%%%%%%%%%%%%%%%%%%%%%%%%%%%%%%%

\section{\textbf{Proof of Theorem~\ref{t2}}}\label{sec2}

\begin{proof}
Write $n=2k$. Put 
\[d=\gcd(a^n+1,b^n+1).\]
Then $a^n+1=da_1$ and $b^n+1=db_1$ for some coprime positive integers $a_1$ and $b_1$. From
\[x^2=(a^n+1)(b^n+1)=d^2a_1b_1\]
we see that actually both $a_1$ and $b_1$ must also be squares. Hence, $a_1=y_a^2$ and $b_1=y_b^2$ for some coprime positive integers $y_a$ and $y_b$.  It is also clear that $d$ is not a square, for if $d=t^2$, then $(ty_a)^2-(a^k)^2=1$, which is impossible. Hence
\begin{equation}\label{s2}
a^{2k}+1=dy_a^2\quad\text{and}\quad b^{2k}+1=dy_b^2.
\end{equation}
Therefore
\begin{equation}\label{sols}
(a^k,y_a)=(u_r,v_r)\quad\text{and}\quad (b^k,y_b)=(u_s,v_s),\quad r<s,
\end{equation}
where $(u_r, v_r)$ and $(u_s, v_s)$ denote two distinct solutions of the negative Pell equation 
\begin{equation}\label{ppl}
u^2-dv^2=-1.
\end{equation}
Let $(u_1,v_1)$ be the minimal solution of \eqref{ppl}. The remaining proof is split in three cases according to the value of $g=\gcd(a,b)$. \par
Case (i). Assume that $g=1$. By Lemma~\ref{div}, $u_1\mid a^k$ and $u_1\mid b^k$. Therefore, $u_1=1$. 
Analogously, $v_1\mid y_a$ and $v_1\mid y_b$. Since $\gcd(y_a,y_b)=1$, we get $v_1=1$. Consequently, $(u_1,v_1)=(1,1)$, and therefore from \eqref{ppl} we get $d=2$. Accordingly, \eqref{s2} becomes
\begin{equation}\label{ev1}
a^{2k}+1=2y_a^2\quad\text{and}\quad b^{2k}+1=2y_b^2.
\end{equation}
If $2k\geq4$, then Lemma~\ref{bs} implies that the equation $X^{2k}+Y^{2k}=2Z^2$ does not have solution in coprime positive integers with $X>Y$. Thus, \eqref{ev1} cannot hold if $b>a\geq1$.
On the other hand, if $2k=2$, then $(a,y_a)$ and $(b,y_b)$ are two distinct solutions of the negative Pell equation $x^2-2y^2=-1$, whose positive integer solutions are completely described in Lemma~\ref{pel2}. It remains to observe that in this case  
\[(a^2+1)(b^2+1)=(u_r^2+1)(u_s^2+1)=2v_s^2\cdot 2v_r^2=(2v_sv_r)^2.\]
Thus, if $n=2$, then $a=u_r$, $b=u_s$ and $x=2v_rv_s$. Also, since $a<b$ and $\gcd(a,b)=1$, we require $r<s$ and
$\gcd(u_r,u_s)=1$. By Lemma~\ref{gcd2}, the latter condition is equivalent to $\gcd(2r-1,2s-1)=1$. This completes the proof of the case $g=1$. \par 
Cases (ii) and (iii). Assume that $g>1$. Since $a<b$, it is also clear that $g\leq a$. Set $\gcd(2r-1,2s-1)=2e-1$. Then by Lemma~\ref{gcd2}, $\gcd(u_r,u_s)=u_e$ and $\gcd(v_r,v_s)=v_e$. Since $v_r=y_a$ and $v_s=y_b$ are coprime, it follows that $v_e=1$. Therefore, $e=1$. This implies that 
\[u_1=\gcd(u_r,u_s)=\gcd(a^k, b^k)=\gcd(a,b)^k=g^k.\]
Thus, the minimal solution of $u^2-dv^2=-1$ is $(u_1,v_1)=(g^k,1)$. This leads to $g^{2k}-d=-1$, or equivalently $g^{2k}+1=d$. Write $a=gw$. Since $d\mid a^n+1$, we have $g^n+1\mid g^n w^n+1$. Observe that $g^n\equiv -1\pmod{g^n+1}$, so $g^n w^n+1\equiv 1-w^n\pmod{g^n+1}$. Therefore $g^n+1\mid w^n-1$. \par
If $w>1$ (which corresponds to (ii), $1<g<a$), then $g^n+1 \leq w^n-1$, which forces $g < w$. Thus, $g^2<gw=a$. If $w=1$ (which corresponds to (iii), $g=a$), then $b=aw'$ for some $w'>1$. Then by exactly the same reasoning we deduce that $a^n+1\mid (w')^n-1$, which implies $a^2<aw'=b$. \par
It remains to show that $4 \nmid n$. Suppose on the contrary, that $4 \mid n$. Write $n=4\ell$. Then \eqref{s2} becomes 
\[(a^{\ell})^4-dy_a^2=-1\quad\text{and}\quad (b^{\ell})^4-dy_b^2=-1.\]
This means that for some fixed $d$, the equation
\begin{equation}\label{nd444}
X^4-dY^2=-1
\end{equation}
has at least two solutions in positive integers. However, this is a contradiction to Lemma~\ref{n4}, which states that for any $d$ at most one solution can occur to \eqref{nd444}. The proof of the theorem is complete.
\end{proof}

%%%%%%%%%%%%%%%%%%%%%%%%%%%%%%%%%%%%%%%%%%%%%%%%%%%%%%%%%%%%
%%%%%%%%%%%%%%%%%%%%%%%%%%%%%%%%%%%%%%%%%%%%%%%%%%%%%%%%%%%%%%%%%%%%%%%%%%%%%%%%%%%%%%%%%%%%%%%%%%%%%%%%%%%%%%%%%%%%%%%%

\section{\textbf{Some Examples and Conjectures}}\label{sec3}

In this section, we compare the results known for the original equation of Szalay
\begin{equation}\label{sz}
(a^n-1)(b^n-1)=x^2,\quad a<b,
\end{equation}
with those obtained for the equation considered in this paper, namely
\begin{equation}\label{vr}
(a^n+1)(b^n+1)=x^2,\quad a<b.
\end{equation}
One immediate difference is that equation \eqref{vr} is defined for $a=1$, whereas \eqref{sz} is not. Another contrast appears when $a$ and $b$ are distinct powers of the same positive integer. In this setting, Cohn \cite{cohn02} proved that  \eqref{sz} is solvable in four cases: 
\[(a,b)\in\{(2,2^2), (3,3^5), (7, 7^4), (c^2-1,(c^2-1)^2)\},\quad c\geq2.\] 
For the plus equation, the corresponding phenomenon is much more restrictive. Apart from the solutions with $a=1$ listed in Theorem~\ref{powers}, \eqref{vr} is solvable only in the case
\[
(a,b)=(7,7^2).
\]
A similar contrast occurs for exponents divisible by $4$. The original equation of Szalay \eqref{sz} has a unique solution with $4\mid n$, which occurs for $(a,b)=(13,239)$ and $n=4$. \cite{cohn02}. In the plus case, by Theorem~\ref{t2}, equation \eqref{vr} has no solutions whenever $4\mid n$. Apart from this distinction, the statement of Theorem~\ref{t2} closely parallels Theorems~2 and 5 of Keskin \cite{kes19} for equation \eqref{sz}. In both cases, the analysis of even exponents reduces to a system of two Pell equations. Next, we provide several examples illustrating how Theorem~\ref{t2} can be applied.

\begin{corollary}\label{cr}
Let
\[
(a,b)\in \{(2,3),(7,41),(8,12),(7,28)\}.
\]
Then \eqref{vr} has a solution in positive integers $(n,x)$ only for $(a,b)=(7,41)$, in which case
\[
(n,x)=(2,290).
\]
\end{corollary}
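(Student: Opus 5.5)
For each of the four pairs I would split by the parity of $n$: for even $n$ apply Theorem~\ref{t2}, and for odd $n$ use a congruence obstruction, or in the case $(8,12)$ exploit the power-of-two factor.

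\textbf{Pair $(2,3)$.} Here $\gcd=1$, so for even $n$ Theorem~\ref{t2}(i) forces $n=2$ and $\{2,3\}\subset\{u_j\}=\{1,7,41,239,\dots\}$, which is false. For odd $n$, work modulo $3$: $2^n+1\equiv0$ and $3^n+1\equiv1$. Write $v_3(2^n+1)=1+v_3(n)$ by lifting the exponent. A cleaner obstruction is modulo $4$ for $n\ge2$: $(2^n+1)(3^n+1)\equiv 1\cdot(3^n+1)$, and for odd $n$ we have $3^n+1\equiv 0 \pmod 4$. That gives no contradiction, so I would try modulo $8$ instead: for odd $n\ge3$, $2^n+1\equiv1$ and $3^n+1\equiv 4\pmod 8$, so the product is $\equiv 4 \pmod 8$. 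This forces $x=2x'$ with $x'$ odd, hence $(2^n+1)(3^n+1)/4$ must be an odd square. Then I would look modulo $3$ or $5$. For odd $n$, $\gcd(2^n+1,3^n+1)$ divides $2^n+1$ and is odd. Modulo $5$, $2^n+1$ and $3^n+1$ are $3,4$ or $4,3$ for $n\equiv1,3\pmod 4$, so the product is $\equiv 12\equiv2\pmod5$, a non-residue. That settles $(2,3)$; $n=1$ also gives $12$, not a square.

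\textbf{Pair $(7,41)$.} Again $\gcd=1$, and $7=u_2$, $41=u_3$ with $\gcd(3,5)=1$, so $n=2$ gives $x=2\cdot5\cdot29=290$, and every other even $n$ is excluded by Theorem~\ref{t2}(i). For odd $n$, modulo $3$: $7^n+1\equiv2$ and $41^n+1\equiv(-1)^n+1\equiv0$. Modulo $5$: $7^n\equiv2^n$ and $41^n\equiv1$, so the product is $\equiv2(2^n+1)$, which is $6\equiv1$ or $18\equiv3$. That fails for $n\equiv1\pmod4$, so I would try modulo $8$: $7^n+1\equiv0$ and $41^n+1\equiv2\pmod 8$. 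Since $v_2(7^n+1)=v_2(8)=3$ for odd $n$, we get $v_2(\text{product})=4$, which is also inconclusive. I would then try modulo $7$: the product is $1\cdot(41^n+1)$ with $41\equiv-1$, so it is $\equiv0$, and $v_7(41^n+1)=v_7(42)+v_7(n)=1+v_7(n)$, also inconclusive. The fallback is a small search for a modulus $m$ (e.g.\ $m=13,17$) where the odd-$n$ residues of the product are all non-squares. This is the step I expect to be the main obstacle.

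\textbf{Pair $(8,12)$.} Here $g=4$ and $1<g<a$, so for even $n$ Theorem~\ref{t2}(ii) requires $g^2=16<8$, which is false. For odd $n$: $8^n+1\equiv 0\pmod 9$ since $8\equiv-1$, with $v_3(8^n+1)=2+v_3(n)$, while $12^n+1$ is coprime to $3$. So I would use modulo $5$ instead: $8^n\equiv3^n$ and $12^n\equiv2^n$. For $n\equiv1\pmod 4$ this gives $4\cdot3=12\equiv2$, a non-residue. For $n\equiv3\pmod4$ it gives $3\cdot4=12\equiv2$ again. Done.

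\textbf{Pair $(7,28)$.} Here $g=a=7$, so for even $n$ Theorem~\ref{t2}(iii) requires $49<28$, which is false. For odd $n$, modulo $5$: $7^n\equiv2^n$ and $28^n\equiv3^n$, and for odd $n$ the products $(2^n+1)(3^n+1)$ are $12\equiv 2$ as in the previous pair. Done.

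This leaves only the odd case for $(7,41)$, where I will search small moduli.
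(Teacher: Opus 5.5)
\textbf{Gap: the odd-$n$ case for $(7,41)$ is never proved.} Your handling of $(2,3)$, $(8,12)$ and $(7,28)$ is the paper's argument. Modulo $5$ the product is $\equiv 2$, a non-residue, for every odd $n$, and for even $n$ parts (i), (ii), (iii) of Theorem~\ref{t2} rule everything out. Your even case for $(7,41)$ ($7=u_2$, $41=u_3$, $\gcd(3,5)=1$, $x=2\cdot 5\cdot 29=290$) is also correct. But you explicitly leave odd $n$ for $(7,41)$ open, so as written the corollary is not established.

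None of the obstructions you tried covers that case. For instance, $n=21$ gives residue $1$ modulo $5$, and $v_7=2$ with $7$-free part $\equiv 4$ modulo $7$, so it survives both tests. Your suggested fallback $m=17$ cannot work at all: $41\equiv 7\pmod{17}$ makes the product $\equiv (7^n+1)^2$, a square, for every $n$.

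\textbf{The missing step is the paper's modulus $13$.} Since $41\equiv 2$ and $7\cdot 2=14\equiv 1\pmod{13}$,
\[
(7^n+1)(41^n+1)\equiv (2^{-n}+1)(2^n+1)=2^{-n}(2^n+1)^2 \pmod{13}.
\]
Because $13\equiv 5\pmod 8$, $2$ is a non-residue modulo $13$, so for odd $n$ both $2^n$ and $2^{-n}$ are non-residues. As $-1$ is a residue ($13\equiv 1\pmod 4$), we get $2^n\not\equiv -1$. Hence $(2^n+1)^2$ is a nonzero square and the product is a non-residue. So $n$ must be even, and Theorem~\ref{t2}(i) finishes.

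This also tells you how to find such a modulus without a blind search. Every modulus in the paper's proof ($5$ for three pairs, $13$ for $(7,41)$) is a prime $p\equiv 1\pmod 4$ with $ab\equiv 1\pmod p$ and $a$ a non-residue modulo $p$. That turns the product into $a^{-n}(a^n+1)^2$ and kills odd $n$. Here $7\cdot 41-1=286=2\cdot 11\cdot 13$. The prime $11$ is unusable because $7^5\equiv -1\pmod{11}$, which makes the product $\equiv 0$ modulo $11$ for $n\equiv 5\pmod{10}$. That leaves $13$.
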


\begin{proof}
If $(2^n+1)(3^n+1)=x^2$ holds for some $(n,x)\in\N^2$, then consideration modulo $5$ implies that $n$ is even. Since $\gcd(2,3)=1$, Theorem~\ref{t2} yields $n=2$. However, $(2^2+1)(3^2+1)=50$ is not a square, so the equation has no solutions.
Similarly, if $(7^n+1)(41^n+1)=x^2$ holds for some $(n,x)\in\N^2$, then consideration modulo $13$ implies that $n$ is even. Indeed, it is easy to verify that for $a\equiv7\pmod{13}$ and $b\equiv2\pmod{13}$, the congruence
$(a^n+1)(b^n+1)\equiv x^2 \pmod{13}$ is solvable only when $n$ is even. Since $\gcd(7,41)=1$, Theorem~\ref{t2} again gives $n=2$. Finally, $(7^2+1)(41^2+1)=290^2$, and therefore $(n,x)=(2,290)$ is the unique solution.\par
If either $(8^n+1)(12^n+1)=x^2$ or $(7^n+1)(28^n+1)=x^2$
holds for some $(n,x)\in\N^2$, then consideration modulo $5$ implies that $n$ is even. For the pair $(8,12)$, we have $g=\gcd(8,12)=4$. Since $g<8$ and $g^2=16>8$, part~(ii) of Theorem~\ref{t2} implies that the equation has no solutions. For the pair $(7,28)$, we have $g=\gcd(7,28)=7$. Since $7^2 > 28$, part~(iii) of Theorem~\ref{t2} implies that the equation has no solutions.
\end{proof}
One of the main open problems concerning equation \eqref{sz} is Cohn's conjecture that no solutions exist for $n\geq5$ \cite{cohn02}. Another conjecture of Cohn asserts that the only solutions with $n=3$ are 
\[(a,b,x)\in\{(2,4,21),(2,22,273),(3,313,28236),(4,22,819)\}.\]
It can be shown that this conjecture would follow from proving that, for square-free $d$, the equation \[X^3-dY^2=1\]
has two or more solutions $(X,Y) \in \N^2$ only if $d \in \{7, 26\}$. These conjectures of Cohn naturally lead to analogous questions for the plus equation \eqref{vr}. Motivated by the results obtained in this paper, we propose the following.
\begin{conjecture}
The Diophantine equation 
\[(a^n+1)(b^n+1)=x^2,\quad a<b,\]
has no solutions in positive integers for $n\ge 4$.
\end{conjecture}
Furthermore, a computer search for $n=3$ and $1\leq a<b\leq 10^6$ motivates the following conjecture for the cubic case.
\begin{conjecture}
The Diophantine equation
\[
(a^3+1)(b^3+1)=x^2,\qquad a<b,
\]
has only the following positive integer solutions:
\[
(a,b,x)\in\{(1,23,156),(6,26,1953),(20,362,616077)\}.
\]
\end{conjecture}
We note that the cubic conjecture would follow from proving that, for square-free $d$, the equation 
\[X^3-dY^2=-1\]
has two or more solutions $(X,Y) \in \N^2$ only if $d \in \{2,217,889\}$.

\renewcommand{\bibname}{Bibliography}
\bibliographystyle{acm}
\bibliography{References}

\end{document}